\def\inv{^{-1}}
\def\refp #1.{(\ref{#1})}
\def\sbr #1.{^{[#1]}}
\def\sfl #1.{^{\lfloor #1\rfloor}}
\def\inv{^{-1}}
\def\?{{\bf{??}}}
\def\a{{\frak a}}
\def\C{\mathbb C}
\def\P{\mathbb P}
\def\Z{\mathbb Z}
\def\ord{\text{\rm ord}}
\def\Q{\mathbb Q}
\def\O{\mathcal O}
\def\g{\mathfrak g}
\def\1/2{\frac{1}{2}}
\def\simto{\stackrel{\sim}{\rightarrow}}
\def\2{{[2]}}
\def\nl{\newline}
\def\<{\langle}
\def\>{\rangle}
\def\2{{[2]}}
\def\scl #1.{^{\lceil#1\rceil}}
\def\spr #1.{^{(#1)}}
\def\sbc #1.{^{\{#1\}}}
\def\supp{\text{supp}}
\def\subpr#1.{_{(#1)}}
\def\beq{\begin{equation*}}
\def\eeq{\end{equation*}}
\def\g3{{\Gamma\spr 3.}}
\newcommand{\eqspl}[2]{
\begin{equation}\label{#1}
\begin{split}
#2\end{split}\end{equation}}
\newcommand{\eqsp}[1]{\begin{equation*}
\begin{split}#1\end{split}\end{equation*}}
\newcommand{\exseq}[3]{
0\to #1\to #2\to #3\to 0
}
\newcommand{\beginalphaenum}{
\begin{enumerate}\renewcommand{\labelenumi}{ }
\item \begin{enumerate}
}
\def\eex{\end{rm}\end{example}}
\def\cC{\mathcal C}
\def\hH{\mathcal H}
\newtheorem{thm}{Theorem} 
\newtheorem*{thm*}{Theorem}
\newtheorem*{prop*}{Proposition}
\newtheorem*{cor*}{Corollary}
\newtheorem*{lem*}{Lemma}
\newtheorem*{claim*}{Claim}
\newtheorem{prop}[thm]{Proposition}
\theoremstyle{remark}
\newtheorem*{n&c}{Notations and conventions}
\newtheorem{rem}[thm]{Remark}
\newtheorem*{rem*}{Remark}
\newtheorem{crit-rem}[thm]{Critical remark}
\newtheorem{example}[thm]{Example}
\newtheorem*{example*}{Example}
\newtheorem*{defn*}{Definition}
\begin{document} 

\title{Incident rational curves}
\author 
{Ziv Ran}


\thanks{arxiv.org}
\date {\today}


\address {\nl UC Math Dept. \nl
Skye Surge Facility, Aberdeen-Inverness Road
\nl
Riverside CA 92521 US\nl 
ziv.ran @  ucr.edu\nl
\url{http://math.ucr.edu/~ziv/}
}

 \subjclass[2010]{14n05}
\keywords{rational curves, secants, bend and break}

\begin{abstract}
	We study families of rational curves on an algebraic variety
	satisfying incidence conditions.
We prove an analogue of bend-and-break: that is, 
we show that under suitable conditions, such a family
must contain reducibles. In the case of curves in $\P^n$
incident to certain complete intersections, we prove the
family is irreducible.

\end{abstract}
\maketitle
Since the seminal work  of Mori and Miyaoka
    \cite{mori-pos-tang} and \cite{miyaoka-mori}, rational
    curves on algebraic varieties, especially Fano 
    manifolds, have been much studied. 
 In particular Harris and his school (see for instance
 \cite{harris-roth-starr}, \cite{riedl-yang}, \cite{tseng-note}
 and references therein) have studied the case of rational curves on
 general Fano hypersurfaces, with particular attention to the question
 of dimension and irreducibility of the family of curves of given degree.
 \par
 Our interest here is in families of rational curves on a given variety $X$
 that are incident to a fixed subvariety $Y$, i.e. meet $Y$ in a 
 specified number
 of (unspecified) points.
 This on the one hand generalizes bend-and-break, which is the case where
 $Y$ consists of 2 points, and on the other hand is related to rational
 curves on hypersurfaces, thanks to the fact (see \cite{hypersurf}) that a hypersurface
 $X_d$  of degree
 $d$ in $\P^n$ admits a 'nice' degeneration 
 (with double points only and smooth total space)
  to the union of a hypersurface 
 of degree $d-1$ with the blowup of $\P^{n-1}$ in a complete intersection
 subvariety $Y$ of type $(d-1, d)$, 
 and rational curves on $X_d$ are thereby related to rational 
 curves in $\P^{n-1}$ meeting $Y$ in a specified number of points.
\par

Here in \S 1 we present two kinds of results of bend-and-break type
(arbitrary ambient space). In the first,
we make
some disjointness conditions on the incident subvarieties, for example
(see Theorem \ref{qdisjoint}) a pair
of disjoint subvarieties $Y_1, Y_2$ meeting the curves in question in 1 
(resp.~2) points. In the second  result
(see Theorem \ref{overfilling}) we assume given  an 'overfilling' family, 
i.e. one
having at least $\infty^1$ members through a point of the ambient space,
together with a subvariety, possibly reducible, meeting the curves in 2 
points.\par
In \S 2 we specialize to the case of curves
of given degree $e$ in $\P^n$, $n\geq 3$, that are  $a$ times incident to
a  fixed
general complete intersection of type $(c, d)$ with $a\leq e$
and $c+d\leq n$. We prove in this case that the family is irreducible
and its general member is well behaved.
\vskip1cm\noindent
In this paper we will work over $\C$ (though the results probably hold over an
arbitrary algebraically closed field, at least if resolution of singularities
is known through dimension $k$).\par
We thank the referee for their helpful comments.

\par
\section{Incidental bend-and-break}
\begin{n&c}The following set-up will be in effect throughout this section.\par
\begin{enumerate}\item
$X$ is an irreducible projective variety of dimension $n\geq 3$;\item
$\pi: \mathcal C\to B$ is a proper flat family over an irreducible 
projecive base  variety of dimension
$ k\geq n-1$, with  fibres $C_b=\pi\inv(b)$,
 so that for general $b$, 
$C_b$ is a nonsingular rational curve; \item
$f:\mathcal C\to X$ is a surjective  morphism  that has degree 1 on a general fibre of $\pi$.\par
\bigskip
A family as in (iii), i.e. such that $f$ is surjective, is said to be \emph{filling}. If in addition
$\dim(B)\geq n$, so that through a point $x\in X$ there are at least $\infty^1$ curves
$f(C_b)$, it is said to be \emph{overfilling}.
\end{enumerate}


\end{n&c}
\begin{thm}\label{qdisjoint}
Under notations and conventions  as above, assume moreover there
are subvarieties $Y_1, Y_2\subset X$ of respective codimension
at least 1 (resp. at least 2) with $Y_1\cap Y_2=\emptyset$,  
such that for general $b\in B$,  $f(C_b)$  meets $Y_1$ (resp. $Y_2$) in at least
1 (resp. at least 2) points. \par
Then the family $\cC/B$ has a reducible fibre $C_b$.
\end{thm}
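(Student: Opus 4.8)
The plan is to argue by contradiction: assume every fibre $C_b$ is irreducible and derive a violation of the incidence hypothesis on $Y_2$. The mechanism is a bend-and-break computation on a ruled surface, in which one point of $Y_2$ supplies a \emph{contracted} section, the disjoint $Y_1$ supplies a multisection meeting it trivially, and the \emph{second} point of $Y_2$ provides the final contradiction.

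First I would exploit the low dimension of $Y_2$ to manufacture a positive-dimensional subfamily through a fixed point. Since a general $C_b$ is not contained in $Y_2$, the incidence variety $D_2:=f^{-1}(Y_2)\subseteq\mathcal C$ is generically finite and dominant over $B$, so $\dim D_2=k$. The projection $f|_{D_2}\colon D_2\to Y_2$ then has general fibre of dimension at least $k-\dim Y_2\ge (n-1)-(n-2)=1$; concretely, through a general point $y_2\in f(D_2)\subseteq Y_2$ there passes a family $B_{y_2}\subseteq B$ of dimension $\ge 1$ whose members all contain $y_2$ in their image. I then choose a complete curve $T\subseteq\overline{B_{y_2}}$ along which the incidence conditions still hold for general members, and pass to a smooth relatively minimal model $\rho\colon S\to\bar T$ of $\pi^{-1}(T)$, carrying the induced morphism $g\colon S\to X$.

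The heart is the numerical computation on $S$. Under the standing assumption that no $C_b$ is reducible, $\rho$ is a $\P^1$-bundle, so $N^1(S)_{\mathbb Q}$ is spanned by the fibre class $F$ and a section, which I take to be $\sigma_0=\overline{g^{-1}(y_2)}$ (the general, multiplicity-one case; the multisection case is analogous). Because $g$ contracts $\sigma_0$ to the point $y_2$, we have $g^*H\cdot\sigma_0=0$ for $H$ ample on $X$; as the members genuinely vary, $g(S)$ is a surface, so $g^*H$ is nef with $(g^*H)^2>0$, and the Hodge index theorem forces $\sigma_0^2=-e<0$. Now set $R_1=\overline{g^{-1}(Y_1)}$ and $R_2=\overline{g^{-1}(Y_2)}$, both effective. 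The incidence hypotheses give $R_1\cdot F\ge 1$ and $R_2\cdot F\ge 2$, and $\sigma_0\subseteq R_2$; crucially $Y_1\cap Y_2=\emptyset$ yields $R_1\cap R_2=\emptyset$, hence $R_1\cdot R_2=0$ and $R_1\cdot\sigma_0=0$. From $R_1\cdot\sigma_0=0$ and $R_1\cdot F=a_1\ge 1$ one gets $R_1\equiv a_1(\sigma_0+eF)$, and since $(\sigma_0+eF)\cdot\sigma_0=0$ and $(\sigma_0+eF)\cdot F=1$, writing $R_2\equiv a_2\sigma_0+b_2F$ (with $a_2=R_2\cdot F\ge 2$) gives $0=R_1\cdot R_2=a_1b_2$, so $b_2=0$ and $R_2\equiv a_2\sigma_0$. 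Effectivity then forces $R_2=a_2\sigma_0$ as a divisor: any component off $\sigma_0$ would meet $\sigma_0$ non-negatively, incompatible with $R_2\cdot\sigma_0=-a_2e<0$. Thus $g^{-1}(Y_2)\subseteq\sigma_0$ set-theoretically, so a general $C_t$ meets $Y_2$ only at $y_2$, contradicting the hypothesis that it meets $Y_2$ in at least two points.

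The main obstacle I expect is the reduction to the $\P^1$-bundle, i.e.\ the bookkeeping guaranteeing that when no $C_b$ is reducible the relatively minimal model has all fibres irreducible, that $g$ is a genuine morphism (no spurious indeterminacy introduced by normalization or resolution of $\pi^{-1}(T)$), and that $\sigma_0$, $R_1$, $R_2$ behave as claimed. One must also check that for general $y_2$ the family $B_{y_2}$ is genuinely positive-dimensional with members retaining the incidence conditions along a complete $T$, and that the swept image $g(S)$ is two-dimensional; these are the points where the normalization/resolution details require care, while the intersection-theoretic core above is clean.
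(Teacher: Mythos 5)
Your proposal is correct in its essentials, but it follows a genuinely different route from the paper's main proof --- and, notably, it is in substance an expanded version of the alternative argument the paper only \emph{sketches} in the Remark immediately following Theorem \ref{qdisjoint} (``By restricting to a 1-parameter subfamily going through a fixed, general point of $Y_2$\dots''). The paper's own proof stays global: after reducing to $\dim B=n-1$ and base-changing so that $\cC=\P(E)$ with $c_1(E)=0$, it extracts three sections $S_1,S_2,S_3$ (one collapsing into $Y_1$, two into $Y_2$), each giving an exact sequence $0\to P_i\to E\to Q_i\to 0$; disjointness makes the composites $P_1\to Q_2$ and $P_1\to Q_3$ nowhere vanishing, so $c_1(P_1)=c_1(Q_2)=c_1(Q_3)$, and then a Hodge-index computation on the whole total space $\cC$ (intersecting with powers of $H=f^*\O_X(1)$ and $\pi^*L$) shows $c_1(P_i)$ is positive on a general fibre $F_i$ of $B\to f(S_i)$ for $i=2,3$, while injectivity of $P_2\to Q_3$ forces it to be negative there; contradiction. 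You instead localize to a one-parameter subfamily through a single fixed general point $y_2\in Y_2$ (this is exactly where $\codim Y_2\geq 2$ and $\dim B\geq n-1$ enter), and run rank-two ruled-surface intersection theory: the contracted section $\sigma_0$ has negative self-intersection by Hodge index on the surface, and the divisor $R_1$ coming from $Y_1$, being disjoint from everything over $Y_2$, pins the class of $R_2$ to a multiple of $\sigma_0$, killing the second point of $Y_2$-incidence. What each approach buys: yours is more elementary and makes transparent that one needs a fixed point only on $Y_2$, not on both $Y_i$ (precisely the obstruction to classical bend-and-break that the paper's first Remark points out); the paper's global computation avoids all transfer of genericity to special subfamilies, and the author states the longer proof was retained ``in the hope it may generalize better.''

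On the points you flag as requiring care: they all work out, and more easily than you fear. Under the contradiction hypothesis every fibre is an irreducible curve of arithmetic genus $0$, hence (granting reducedness, which the paper also implicitly assumes when writing $\cC=\P(E)$) a smooth rational curve; so after normalizing $T$ the pulled-back family is already a $\P^1$-bundle --- no resolution, no relatively minimal model, and therefore no descent problem for $g$. The genericity transfer is standard: a component $I$ of the incidence correspondence in $B\times Y_2$ dominating $B$ is generically finite over $B$, so a general point of a general fibre of $I\to Y_2$ is a general point of $I$ and hence maps to a \emph{general} point of $B$; this simultaneously gives $\dim B_{y_2}\geq 1$ (using that no fibre is contracted by $f$, which follows from constancy of $\deg f^*\O_X(1)$ in the flat family), the persistence of both incidence conditions at general $t\in T$, and --- choosing $T$ through such a general point --- the two-dimensionality of $g(S)$.
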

\begin{proof}
Assume for contradiction all fibres $C_b$ are irreducible.
With no loss of generality we may assume $\dim(B)=n-1$. 
After suitable base-change  we may assume $B$ is smooth.
Actually the argument below will use only a general curve-section of $B$,
so it's enough to assume $B$ normal.
Let $L$ be a very ample
line bundle on $B$ and set $H=f^*(\O_X(1))$ where $\O_X(1)$ is a very ample line bundle on
$X$.
Then I claim that after a further base-change we
may assume that 
 \[\mathcal C=\P(E)\] where $E$ is a rank-2 vector bundle on $B$:
 indeed, if  base-change enough so that $\pi$
 admits a section $D\subset \cC$, we can take $E=\pi_*(\O(D))$.
 Subsequently, after a further base-change, we may assume
 that $\wedge^2E$ is divisible by 2 in the Picard group,
 hence,  after a suitable twist, we may assume $\wedge^2E=\O_B$\
 and in particular, as divisors,
\eqspl{c1=0}{c_1(E)\equiv_{\mathrm{num}}0.}
We will henceforth take $c_1$ to have values in the Neron-Severi group,
so $c_1(E)=0$.
By assumption, we have  'multisections' $S'_i\subset\cC$, 
i.e. possibly reducible
subvarieties $S'_i\subset\cC, i=1,2$, generically finite  of degree 
at least 1 (resp.~at least 2) over $B$ such that
\[f(S'_i)\subset Y_i, i=1,2.\]
Base-changing via $S'_i\to B$, the pullback of $S'_i$ splits of a section.
Then after a further base-change,  we may assume we have 
3 distinct \emph{sections}
$S_1, S_2, S_3$ such that

\[f(S_1)\subset Y_1, f(S_2), f(S_3)\subset Y_2.\]
Each section $S_i$ corresponds to an exact sequence
\[\exseq{P_i}{E}{Q_i},\]
where $c_1(P_i)=-c_1(Q_i)$ thanks to $c_1(E)=0$.
Since $Y_1\cap Y_2=\emptyset$, it follows that
\eqspl{disjoint}{S_1\cap S_2=S_1\cap S_3=\emptyset}
and hence

\eqspl{1=2}{
c_1(P_1)= c_1(Q_2)=c_1(Q_3).
}
For $i=2, 3$ set $Z_i=f(S_i), m_i=\dim(Z_i)\leq n-2$.
 Note that each of  $S_2, S_3$
collapses under $f$, i.e. while $S_i$ has codimension 1 in $\mathcal C$,
 $Z_i$ has codimension 2 or more
in $f(\mathcal C)=X$. 
Identifying $S_i$ with $B$, let 
\[f_i:B\to Z_i\]
be the resulting map,
and let $F_i$ be a general fibre of $f_i$ which has codimension $m_i$.
Note that
\eqspl{fibre}{H^{m_i}.S_i\sim \deg(Z_i)F_i.}

Now, we have
\eqspl{}{
H^{m_i+1}S_i\pi^*(L)^{k-m_i-2}=0,
}
while, by surjectivity of $f$,
\eqspl{}{
H^{m_i+2}\pi^*(L)^{k-m_i-2}>0.
}
Therefore the Hodge index theorem implies that
\eqspl{hodge}{
H^{m_i}S_i^2\pi^*(L)^{k-m_i-2}<0.
}
Now as $S_i$ is a section, we have
\[\O_{S_i}(S_i)=2Q_i.\]
In view of \eqref{fibre}, \eqref{hodge} means
\eqspl{}{\deg(Z_i)F_i.c_1(P_i)L^{k-m_i-2}>0,
}
so we may assume
\eqspl{pos}{F_i.c_1(P_i).L^{k-m_i-2}>0, i=2,3.}
\par
 Now, since the sections $S_i$ are pairwise distinct,
the natural map $P_i\to Q_j$ must be nontrivial, hence injective, for all $i\neq j$, hence
 $c_1(P_2)$ has negative degree
 on 
a general curve section of 
$F_3$. Thus
\eqspl{nonpos}{
c_1(P_2).F_3.L^{k-m_i-2}< 0.
}
But this obviously contradicts \eqref{1=2}.
\end{proof}
\begin{rem}
	The situation of Theorem \ref{qdisjoint} is not a priori
	amenable to the usual bend-and-break because there is not necessarily
	a curve $f(C_b)$, much less a 1-parameter family of such, through
	given points $y_1\in Y_1, y_2\in Y_2$.
	\end{rem}
\begin{rem}
	The last part of the proof above can be shortened somewhat using
	the following (sketched) argument; we have presented the longer proof
	in the hope it may generalize better. By
	restricting to a 1-parameter subfamily going through a fixed, general point of
	$Y_2$, we may assume that $f$ contracts $S_3$ to a point
	while $B$ is 1-dimensional. Then the disjointness condition
	\ref{disjoint} implies that $S_2$ and $S_3$ are numerically
	equivalent. This, together with the fact that $S_2$ and $S_3$
	are distinct and $S_3$ is contracted, easily yields a contradiction.
	\end{rem}
The hypotheses of Theorem \ref{qdisjoint} afford tweaking in various
ways, for example the following.
\begin{thm}Under Notation and Conventions as in the Introduction,
	assume given subvarieties $Y_1, Y_2, Y_3\subset X$ meeting a general
	$f(C_b)$ such that\par 
	(i) each $Y_i$ has has codimension 3 or more;\par
	(ii) $\dim(Y_2\cap Y_3) +\dim (Y_1)\leq n-3$;\par
	(iii)  $Y_1\cap Y_2\cap Y_3=\emptyset$;\par
	(iv) the subfamily of $B$ consisting of curves $f(C_b)$ 
	that are contained in $Y_2\cap Y_3$
	is of codimension $>2$.
	\nl\noindent
	Then there is a reducible fibre $C_b$.
	\end{thm}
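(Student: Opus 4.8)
The plan is to reuse the opening reductions of the proof of Theorem~\ref{qdisjoint}, but to close the argument by a direct appeal to the classical bend-and-break rather than by a Hodge-index computation. Assuming for contradiction that every fibre $C_b$ is irreducible, I first base-change so that $B$ is normal, reduce to $\dim B=n-1$, and extract from the three incidence conditions three distinct sections $S_1,S_2,S_3\subset\mathcal C$ with $f(S_i)\subset Y_i$. Writing $Z_i=f(S_i)$ and $f_i\colon B\cong S_i\to Z_i$, condition (i) gives $\dim Z_i\le\dim Y_i\le n-3$, so each $f_i$ collapses $B$ with fibres of dimension $\ge 2$. Finally, since the family is filling the curves sweep out $X$, so the classifying map $B\to\mathrm{Hilb}(X)$ has image of dimension $\ge n-1=\dim B$ and is therefore generically finite; this is what will make the family I build at the end genuinely move.

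The decisive use of (iii) is to \emph{merge} two of the sections. Since $f(S_1\cap S_2\cap S_3)\subset Y_1\cap Y_2\cap Y_3=\emptyset$, the three sections have no common point, so the pairwise loci $D_{ij}=S_i\cap S_j$, viewed as subvarieties of $B$, are pairwise disjoint. Over $W:=D_{23}$ the sections $S_2$ and $S_3$ therefore agree, defining a single section $S_{23}$ with $f(S_{23})\subset Y_2\cap Y_3$, while $S_1|_W$ remains disjoint from it. Thus on $W$ a curve $f(C_b)$ carries two marked incidence points, lying in the \emph{disjoint} sets $Y_1$ and $Y_2\cap Y_3$. This is the whole point of passing to $W$: it trades the two separate incidences with $Y_2$ and $Y_3$ (whose images $Z_2,Z_3$ are individually too large to contract against when $n$ is big) for a single incidence with $Y_2\cap Y_3$, whose dimension is governed by (ii).

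I would then manufacture the reducible fibre by a contraction count on $W$. Consider $\Phi=(f_1|_W,f_{23})\colon W\to Z_1\times(Z_2\cap Z_3)$. By (ii) the image has dimension at most $\dim Y_1+\dim(Y_2\cap Y_3)\le n-3$, whereas $\dim W=n-2$; hence a general fibre $B'$ of $\Phi$ is a complete curve. Over $B'$ both $f_1$ and $f_{23}$ are constant, so every $f(C_b)$ with $b\in B'$ passes through two fixed points $y_1\in Y_1$ and $y_{23}\in Y_2\cap Y_3$, which are distinct because $Y_1\cap Y_2\cap Y_3=\emptyset$. As $B'$ is complete and, being general, is not contracted by the generically finite classifying map, the subfamily over $B'$ is a nontrivial one-parameter family of rational curves through two fixed points; the original bend-and-break then yields a reducible member, contradicting the standing assumption.

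The genuinely hard part is not this final invocation but the control of the degenerate configurations concealed in the outline, and it is here that (iv) --- so far unused above --- must be spent. One must know that $W=D_{23}$ is nonempty of pure dimension $n-2$: the extreme cases $S_2\cap S_3=\emptyset$ and $S_2\equiv S_3$ have to be disposed of separately (the former obstructs the merging, the latter only improves the count), and ruling out an over-small $W$ is where the general-position content of (i) and (iv) enters. More seriously, one must certify that the one-parameter family over the chosen $B'$ really varies, so that bend-and-break is not vacuous; the danger is that a general fibre of $\Phi$ collapses into the locus of curves degenerating into $Y_2\cap Y_3$, and condition (iv), by bounding that locus to codimension $>2$, is exactly what lets a general $B'$ avoid it. Pinning down these dimension estimates --- the nonemptiness and dimension of $W$, and the non-constancy of the family over $B'$ --- is the crux; the bend-and-break itself is then immediate.
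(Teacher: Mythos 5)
Your main case reproduces the paper's own argument almost verbatim: the paper, too, restricts to $B'=\pi(S_2\cap S_3)$ (your $W$), observes that the restricted family carries two \emph{disjoint} sections contracted into $Y_1$ and into $Y_2\cap Y_3$ respectively, uses (ii) to produce $\infty^1$ members through a fixed pair of distinct points, and invokes classical bend-and-break. Two of your side worries can be dispatched: $W$ cannot be ``over-small,'' since the locus where the sections $S_2,S_3$ agree is the zero scheme of a nonzero section of the line bundle $\Hom(P_2,Q_3)$ on $B$ and is therefore automatically pure of codimension $1$ when nonempty and $S_2\neq S_3$; and the coincident case $S_2=S_3$ indeed only enlarges $W$. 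Your reading of (iv) as what guarantees that the one-parameter family over $B'$ genuinely moves is a reasonable gloss on the paper's (equally terse) use of that hypothesis.

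The genuine gap is the case $S_2\cap S_3=\emptyset$, which you flag (``the former obstructs the merging'') but never dispose of, and which your announced strategy --- replacing the Hodge-index computation by a direct appeal to bend-and-break --- cannot reach. This case is not vacuous: hypotheses (i)--(iv) allow $Y_2\cap Y_3=\emptyset$, in which case (ii)--(iv) hold trivially and the two marked points never merge. Bend-and-break then has no purchase, because each $Z_i=f(S_i)$ has codimension $\geq 3$, so a fibre of $(f_i,f_j)\colon B\to Z_i\times Z_j$ has expected dimension $(n-1)-2(n-3)=5-n\leq 0$ for $n\geq 5$: nothing guarantees any positive-dimensional family of curves through two fixed points. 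The paper devotes its first case to exactly this configuration and resolves it with the numerical machinery of Theorem \ref{qdisjoint}: disjointness of $S_2$ and $S_3$ makes the composite $P_3\to E\to Q_2$ nowhere zero, so $c_1(P_3)=c_1(Q_2)=-c_1(P_2)$; both $P_2$ and $P_3$ map nontrivially to $Q_1$, so $Q_1-P_2$ and $Q_1-P_3$ are effective; but the Hodge-index argument applied to the collapsed section $S_1$ (here one uses that $Y_1$ also has large codimension) shows $Q_1$ has negative degree on a general curve section of a general fibre of $f_1=f|_{S_1}$, forcing both $P_2$ and $P_3$ to have negative degree there, which is impossible since $c_1(P_2)+c_1(P_3)=0$. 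Without this (or a substitute), your argument proves the theorem only under the additional assumption that the general curve of the family meets $Y_2\cap Y_3$; supplying the disjoint case requires precisely the Hodge-index idea you set out to avoid.
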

\begin{proof}
	We may assume each $Y_i$ corresponds to a section $S_i$ of $\cC/B$,
	which in turn corresponds to an exact sequence
	\[\exseq{P_i}{E}{Q_i}, i=1,2,3.\]
	If $S_2\cap S_3=\emptyset$ then $c_1(Q_2)=c_1(P_3)$ and we easily get 
	a contradiction as above because both $P_2$ and $P_3$ 
	inject into $Q_1$.\par
	Now suppose $S_2\cap S_3\neq\emptyset$. If $\pi(S_2\cap S_3)$
	has dimension $<n-2$, it yields an $(n-3)$- dimensional
	family entirely contained in $Y_2\cap Y_3$, against our hypotheses.
	Hence $S_2\cap S_3$ projects
	to an $(n-2)$-dimensional subfamily $B'\subset B$ and the restricted family
	$\cC'/B'$ has disjoint sections corresponding to $S_1$ and $S_2$
	which get contracted to $Y_1$ and $Y_2\cap Y_3$ respectively.
	By Assumption (ii) this family has $\infty^1$ members through
	a pair of fixed points on $Y_1$ and $Y_2\cap Y_3$, so standard
	bend-and-break applies.
	
	\end{proof}
Next we give a bend-and-break type result for overfilling families.
\begin{thm}\label{overfilling}
	Under Notations and Conventions as in the Introduction, assume further\par
	(i) there is a subvariety $Y\subset X$ 
	of codimension 2 or more such that a general $f(C_b)$ meets $Y$ in 2
	or more points;\par
	(ii)\ \ $\dim(B)\geq n$.\nl\noindent
	 Then there is a reducible fibre $C_b$.
	\end{thm}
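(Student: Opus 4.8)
The plan is to run the same Hodge-index machinery as in the proof of Theorem~\ref{qdisjoint}, with the overfilling hypothesis $\dim B\ge n$ substituting for the disjoint auxiliary subvariety $Y_1$ used there. Assume for contradiction that every fibre $C_b$ is irreducible. After cutting $B$ by general hyperplane sections I reduce to $\dim B=n$ (the family is still filling, and now $f\colon\cC\to X$ has one-dimensional fibres), and after base-change I may take $B$ normal and $\cC=\P(E)$ with $E$ of rank two, $c_1(E)=0$. The $2$-secancy to $Y$ yields, after a further base-change splitting the degree-two multisection over $Y$, two distinct sections $S_1,S_2\subset\cC$ with $f(S_i)\subset Y$, each in an exact sequence $0\to P_i\to E\to Q_i\to 0$ with $c_1(P_i)=-c_1(Q_i)$. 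Write $Z_i=f(S_i)$, $m_i=\dim Z_i\le n-2$, let $f_i\colon B\cong S_i\to Z_i$, and let $F_i$ be a general fibre of $f_i$, of dimension $n-m_i\ge 2$.

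There is first an easy case. Consider $(f_1,f_2)\colon B\to Y\times Y$. If this map is not generically finite---which happens automatically when $n=3$, since then $\dim(Y\times Y)\le 2<n$---its positive-dimensional fibres give a family of curves through two fixed points of $Y$, and ordinary bend-and-break produces a reducible member. So I may assume $(f_1,f_2)$ generically finite. Now, exactly as in Theorem~\ref{qdisjoint}: since $f(S_i)=Z_i$ has dimension $m_i$ one has $H^{m_i+1}S_i\,\pi^*L^{\,n-m_i-1}=0$, while surjectivity of $f$ gives $H^{m_i+2}\pi^*L^{\,n-m_i-1}>0$, so the Hodge index theorem forces $H^{m_i}S_i^2\,\pi^*L^{\,n-m_i-1}<0$. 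Using $\O_{S_i}(S_i)=2Q_i$ and $c_1(Q_i)=-c_1(P_i)$ this reads as the positivity
\[F_i\cdot c_1(P_i)\cdot L^{\,n-m_i-1}>0,\qquad i=1,2.\]
On the other hand, since $S_1\ne S_2$ the composite $P_i\to E\to Q_j$ is nonzero, hence injective, for $i\ne j$, so $c_1(Q_j)-c_1(P_i)$ is effective; pairing against the effective cycle $F_j\cdot L^{\,n-m_j-1}$ and invoking the positivity above yields
\[c_1(P_i)\cdot F_j\cdot L^{\,n-m_j-1}<0,\qquad i\ne j.\]

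The two displays would contradict one another the moment the classes $F_1$ and $F_2$ can be identified numerically, and this is exactly where the overfilling hypothesis must be spent. Because $\dim B=n$, the map $f$ has one-dimensional fibres, and a general fibre $\Gamma=f^{-1}(x)$ meets each member of the subfamily through $x$ once and is disjoint from $S_1$ and $S_2$ (as $x\notin Y$); moreover the monodromy of the family interchanges the two points of $C_b\cap Y$, hence interchanges $S_1$ and $S_2$. The plan is to combine this contracted fibre with the symmetry to conclude $m_1=m_2$ and $F_1\equiv F_2$ numerically, after which the positivity $c_1(P_1)\cdot F_1\cdot L^{\,n-m_1-1}>0$ and the negativity $c_1(P_1)\cdot F_2\cdot L^{\,n-m_1-1}<0$ collide, producing the required reducible fibre. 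I expect the main obstacle to be precisely this last step---extracting the numerical identification $F_1\equiv F_2$ from the overfilling and monodromy structure---since the Hodge-index and injectivity inputs are otherwise formal analogues of those in Theorem~\ref{qdisjoint}.
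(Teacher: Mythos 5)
Your setup, your Hodge--index positivity $F_i\cdot c_1(P_i)\cdot L^{n-m_i-1}>0$, your injectivity-based negativity $c_1(P_i)\cdot F_j\cdot L^{n-m_j-1}<0$ for $i\neq j$, and your easy-case reduction (non-generically-finite $(f_1,f_2)$ handled by ordinary bend-and-break) are all sound, and the first two are indeed ingredients of the paper's argument. But the step you defer --- extracting $m_1=m_2$ and $F_1\equiv F_2$ from overfilling plus monodromy --- is not a technical loose end; it is the whole theorem, and the route you sketch cannot close it. Monodromy interchanges the two sections, so for the covering involution $\sigma$ it gives $\sigma_*F_1\equiv F_2$ and $\sigma_*c_1(P_1)\equiv c_1(P_2)$; pushing your positivity through $\sigma$ therefore only reproduces the positivity for the other index and never collides with the negativity. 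Worse, the identification $F_1\equiv F_2$ is simply false in the model situation: for the universal $2$-secant line to a space curve $Y\subset\P^3$ one has $B=Y\times Y$, the two sections are the two marked secant points, $F_1=\{y\}\times Y$ and $F_2=Y\times\{y\}$, the analogues of both of your displayed inequalities and the monodromy symmetry all hold, yet $F_1\not\equiv F_2$ (the class $c_1(P_1)$ itself separates them) and there is no reducible fibre. That family is filling but not overfilling, so it does not contradict the theorem --- but it shows your ingredients together with symmetry are mutually consistent, so hypothesis (ii) must enter in an essentially new way, and in your proposal it never does beyond the remark that fibres of $f$ are curves.

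The paper spends the overfilling hypothesis quite differently, through the contracted fibre class rather than through $F_1$ versus $F_2$. Let $F_0$ be a (one-dimensional, after shrinking $B$) component of a general fibre of $f$. The Hodge-index positivity is used to show $\pi^*(P_i)|_{F_i}$ is $\Q$-effective; since $F_i$ is swept out by curves algebraically equivalent to $F_0$, this gives $\pi^*(P_i)\cdot F_0\geq 0$, and then the injection $P_1\to Q_2$ sandwiches
\begin{equation*}
0\le \pi^*(P_1)\cdot F_0\le \pi^*(Q_2)\cdot F_0=-\pi^*(P_2)\cdot F_0\le 0,
\end{equation*}
so $\pi^*(P_i)\cdot F_0=0$. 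Since $H^n$ is a positive multiple of $F_0$ and the zero locus of $P_1\to Q_2$, which is $\pi_*(S_1\cap S_2)$, represents $Q_1+Q_2$, this vanishing says exactly that curves through a general point $x\in X$ avoid the locus in $B$ where $S_1$ and $S_2$ meet. Restricting to a one-parameter family $B_1$ inside $\pi(f^{-1}(x))$ then produces a $\P^1$-bundle carrying three pairwise disjoint sections: the two coming from $S_1,S_2$ (disjoint from each other by the previous step, and from the third because $x\notin Y$) and the section over $x$, which $f$ contracts. Three pairwise disjoint sections force the rank-two bundle to split as $A\oplus A$, i.e.\ the bundle is a product, and a product has no section contracted by $f$ --- contradiction. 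This geometric endgame, not a numerical identification of $F_1$ with $F_2$, is the missing idea in your proposal.
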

\begin{proof}
	We begin as in the proof of Theorem \ref{qdisjoint},
	arguing for contradiction and base-changing so that 
$\cC=\P(E)$ with $c_1(E)=0$ numerically,
and so that we have 2 sections $S_1, S_2$
collapsing to $Y$.
Let $P_i\subset E$ be the line subbundle corresponding to $S_i$ as before
and let $F_0$ be a component of a general fibre of $f$ over $X$. 
Note $\dim(F_0)=\dim(B)+1-n\geq 1$; replacing $B$ by a suitable subfamily we may assume 
$\dim(F_0)=1$.\par
We next aim to show that the subfamily of curves going through
a fixed point of $X$ is disjoint from the subfamily 
where the sections $S_1$
and $S_2$ meet (i.e., informally speaking, where the curves are 'tangent'
to $Y$).
To this end, I claim now that $\pi^*(P_i).F_0=0, i=1,2$.
To see this let $F_i$ be a component of a general fibre of $f|_{S_i}$. 
Thus $\dim(F_i)\geq 2$.
Then using Hodge index as
above we see that for any ample line bundle $A$ on $B$,
\[\pi^*(A)^{\dim(F_i)} \pi^*(P_i).F_i>0.\]
Since $F_i$ lies on $S_i$ which projects
isomorphically to $B$, 
this implies that $\pi^*(P_i)|_{F_i}$ is $\Q$-effective. 
Since $F_i$ is filled up by curves algebraically
equivalent to $F_0$, it follows that
\[\pi^*(P_i).F_0\geq 0, i=1,2.\] 
Now as $S_1$ and $S_2$ are distinct, 
the composite of the injection $P_1\to E$
and the projection $E\to Q_2$ yields an injection
$P_1\to Q_2$ and thus  $Q_2-P_1$ is effective. Hence, as $F_0$
is a general fibre, we have
\[0\leq \pi^*(P_1).F_0\leq \pi^*(Q_2).F_0\leq 0.\]
Thus $P_i.F_0=0, i=1,2$, as claimed. Since $H^n$ is a positive multiple
of $F_0$, we have
\eqspl{}{
\pi^*(P_i).H^n=0, i=1,2.
}
Now recall the injection of invertible sheaves
\[P_1\to Q_2. \]
Its zero locus, which is numerically $Q_1+Q_2=-(P_1+P_2)$,
is just the locus of points in the base $B$ over which the sections
$S_1=\P(Q_1)$ and $S_2=\P(Q_2)$ intersect, 
i.e. \[Q_1+Q_2\equiv_{\mathrm{num}}\pi_*(S_1\cap S_2).\] Since we know
\[H^n.(\pi^*(Q_i+Q_j))=0,\]
we conclude that \[f(\pi\inv\pi(S_1\cap S_2)\subsetneq X.\] 
This means exactly that the locus of curves going through
a general point of $X$ is disjoint from the locus where $S_1$
and $S_2$ intersect.\par 
Now we can easily conclude.
Let $x\in X$ be general, and
let $B_1\to B$ be a component
of the  the normalization of $\pi(f\inv(x))$, which we may assume
is a smooth curve. Let $\cC_1/B_1$ be the pullback $\P^1$ bundle. 
Then $\cC_1$ is endowed
with 3 pairwise disjoint sections, namely $S'_1, S'_2$ corresponding to $S_1, S_2$
(disjoint because 
$x\not\in f(\pi\inv\pi(S_1+S_2))$), 
plus a section $T$ contracting to $x$.
But this is evidently impossible: writing the corresponding rank-2 bundle on $B_1$
as $A_1\oplus A_2$ corresponding to $S'_1, S'_2$, the subbundle corresponding
to $T$ is isomorphic on the one hand to $A_1$, on the other hand to $A_2$,
hence $A_1\simeq A_2$ and $\cC_1$ is a product bundle, which has no contractible sections.
Contradiction.\par
\end{proof}
\begin{example}
See \cite{hypersurf} for context and motivation. Let $Y$ be a 
smooth codimension-c
subvariety of $\P^n, c\geq 2$. Any component $B$ of the family of rational
curves of degree $e$
 meeting $Y$ in $a$ points is at least $(e+1)(n+1)-4-a(c-1)$-dimensional.
Assume $2\leq a\leq e$ and that a general curve in $B$ is smooth. 
For a general curve $C$ in the family, the normal bundle to
$C$ in $\P^n$ is a quotient of a sum of copies of $\O(e)$, hence it is
a direct sum of line bundles of degrees $\geq e$. Therefore since $a\leq e$,
the secant bundle $N^s_C$ is semipositive, hence the family is filling.
Hence it follows from Theorem \ref{overfilling}
that $B$ will parametrize some reducible curves.\par
It is shown in \cite{hypersurf} that when $Y$ is a $(d-1, d)$
complete intersection, there is only one component $B$ as above,
i.e. the family is irreducible,
which also implies the existence of reducibles in this case.
Another irreducibility result is given in the next section.
	\end{example}
\section{irreducibility}
For integers $e, a, n$ and a smooth subvariety $Y\subset \P^n$,
we denote by $M_e(a, Y)$ the projective
scheme parametrizing triples $(f, C, \a)$ where
$(f, C)$ is in the  Kontsevich space of stable maps
$f:C\to\P^n$ with $C$ of genus 0 and no marked points, 
and $\a$ is a length-$a$ subscheme $\a\subset f\inv(Y)$
(see \cite{ful-pan}). 
Our purpose is to prove
\begin{thm}
	Assume $Y$ is a general complete intersection of type $(c, d)$
	with $c+d\leq n, n\geq 3$. Then for all
	$a\leq e$, $M_e(a, Y)$ is irreducible of dimension
	$(e+1)(n+1)-4-a$ and for its general point, the following holds:
	the source $C$ is
$\P^1$, the map $f$ has degree 1, and $f\inv(Y)=\a$.
	Moreover if $a\leq e-1$ or $c+d\leq n-3$, $f$ is an embedding.
	\end{thm}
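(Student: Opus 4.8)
The plan is to induct on the degree $e$. For $a=0$ the space $M_e(0,Y)$ is the full Kontsevich space $\overline{M}_{0,0}(\mathbb{P}^n,e)$, whose irreducibility is classical, so I assume $a\geq 1$; the base case $e=1$ is the irreducibility of the family of marked lines meeting $Y$, which is clear since such lines are parametrized by a $\mathbb{P}^{n-1}$-bundle (lines through a point) over the irreducible $Y$. The inductive step rests on two ingredients: a deformation-theoretic description of the general point, and a degeneration-plus-gluing argument that glues all components through the boundary. For the first, let $f:\mathbb{P}^1\to\mathbb{P}^n$ be a degree-$e$ immersion meeting $Y$ transversally in $a$ reduced points. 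The Euler sequence exhibits $f^*T_{\mathbb{P}^n}$ as a quotient of $\mathcal{O}(e)^{n+1}$, so every summand of the normal bundle $N_f$ has degree $\geq e$. Meeting the codimension-$2$ variety $Y$ at a point imposes on sections of $N_f$ a single downward elementary modification (one of the two transverse directions being absorbed by moving the point along the curve), so the secant bundle $N^{s}$ has rank $n-1$ and degree $\deg N_f-a=e(n+1)-2-a$; for $a\leq e$ it stays semipositive. Hence $H^1(N^{s})=0$, and $M_e(a,Y)$ is unobstructed and smooth of the expected dimension $h^0(N^{s})=(e+1)(n+1)-4-a$ at such points.

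Next I would show that every irreducible component of $M_e(a,Y)$ has this exact dimension with a general member of the above type. The lower bound $(e+1)(n+1)-4-a$ for every component holds because $M_e(a,Y)$ is cut out of the Kontsevich space by $a$ incidence conditions, each of codimension at most $1$ (two equations for landing in $Y$, one free parameter for the point). The upper bound and the niceness of the general member then follow from the $H^1$-vanishing above once I rule out components whose general member is a multiple cover, has reducible domain, or has $f\inv(Y)\supsetneq\alpha$: the first and third form loci of dimension $<(e+1)(n+1)-4-a$, while the second lies in the boundary and is handled by the smoothing argument below. Using the bend-and-break results of Section~1 (Theorem~\ref{overfilling} and its variants): fixing one incidence point at a general $y\in Y$ produces, inside any component, a positive-dimensional family of rational curves through the fixed point $y$, which must acquire a reducible member. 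A dimension count on the intersection of the component with $\partial\overline{M}_{0,0}(\mathbb{P}^n,e)$ then shows the general such degeneration breaks off a $1$-secant line, i.e.\ lands at a general point of the boundary divisor $\Delta$ whose members are $C_1\cup L$ with $C_1$ a nice degree-$(e-1)$ curve carrying $a-1$ incidences and $L$ a line carrying one, joined at a general node not lying over $Y$.

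The divisor $\Delta$ is the image of the fibre product of $M_{e-1}(a-1,Y)$ with the space of lines through the node meeting $Y$. Since $a-1\leq e-1$, the inductive hypothesis makes $M_{e-1}(a-1,Y)$ irreducible, and the lines through a point meeting the irreducible $Y$ (here $n\geq 3$ keeps $\dim Y=n-2\geq 1$ and $Y$ irreducible) form an irreducible family; hence $\Delta$ is irreducible of codimension $1$. A gluing computation — each factor being unobstructed and the node a general smooth point of the target, so that the glued normal sheaf has vanishing $H^1$ — shows $M_e(a,Y)$ is smooth at a general point of $\Delta$. Therefore $\Delta$ lies in the closure of a unique component $\Sigma_0$, namely the one obtained by smoothing the node, and since every component contains a general point of $\Delta$, every component equals $\Sigma_0$. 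I expect the main obstacle to be exactly this coordination step: controlling the bend-and-break degeneration tightly enough that \emph{every} component reaches a general point of the \emph{same} irreducible divisor $\Delta$ (rather than scattered strata, or only non-general points of $\Delta$), together with the $H^1$-vanishing for the glued normal sheaf that makes $M_e(a,Y)$ genuinely smooth along $\Delta$.

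Finally the properties of the general point follow from the construction: irreducibility together with the nice general member of $\Sigma_0$ gives $C\cong\mathbb{P}^1$, $\deg f=1$, and $f\inv(Y)=\alpha$, since transversality forces $f\inv(Y)$ to consist of exactly $a$ reduced points, all of which make up $\alpha$. For the embedding statement I would use that a general degree-$e$ rational curve in $\mathbb{P}^n$ with $n\geq 3$ is already smooth, and argue that the incidence conditions do not force a node: when $a\leq e-1$ there is a spare degree of freedom to separate any two prospective branches, whereas when $c+d\leq n-3$ the extra codimension keeps $f\inv(Y)$ general enough that no secant line through two incidence points is forced. In either case the general $f$ is an embedding.
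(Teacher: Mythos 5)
Your deformation-theoretic ingredients (summands of $N_f$ of degree $\geq e$, semipositivity of the secant bundle $N^s$ for $a\leq e$, $H^1$-vanishing and unobstructed gluing for combs $C_1\cup L$) match the paper's, and your induction on $e$ with base case $e=1$ is also the paper's skeleton. But there is a genuine gap at exactly the step you flagged, and it is not a technical loose end — it is the heart of the matter. You claim that bend-and-break plus ``a dimension count on the intersection of the component with the boundary'' shows the general degeneration breaks off a $1$-secant line, i.e.\ lands at a general point of $\Delta$. Dimension counting cannot do this: \emph{every} boundary stratum of broken curves $C_1\cup C_2$ with $\deg C_i=e_i$, $e_1+e_2=e$, carrying $a_1+a_2=a$ incidences, has the same dimension $(e+1)(n+1)-5-a$ (each splitting loses $n-2$ for forcing the two images to meet, gains $2$ for the node, and the incidence count is additive), so the stratum $\Delta$ is in no way dimensionally distinguished from, say, a splitting into two curves of degree $\geq 2$, or a splitting where the node maps into $Y$. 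Theorem \ref{overfilling} (which is the correct citation here — note that fixing a \emph{single} point never forces breaking by itself, as lines through a point show, and the theorem moreover needs $a\geq 2$) gives only the \emph{existence} of a reducible member in a filling component; it gives no control whatsoever on the combinatorial type of the break, nor that the component's boundary meets $\Delta$ at a general point rather than along some other stratum entirely. Without that, the conclusion ``every component contains a general point of the same irreducible divisor $\Delta$, hence equals $\Sigma_0$'' does not follow. A secondary, smaller gap: components whose general member has reducible domain are not ``handled by the smoothing argument below''; that argument only concerns the component $\Sigma_0$ containing $\Delta$, and you need a separate count (as the paper does at nodes) to exclude generically-nodal components.

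The paper resolves the coordination problem by a completely different mechanism that you may want to study: it never attempts to control degenerations of the curves for general $Y$. Instead it degenerates $Y$ itself to $Y_0=(H_1\cup\cdots\cup H_c)\cap(H_{c+1}\cup\cdots\cup H_{c+d})$, a union of codimension-$2$ coordinate linear spaces. For $Y_0$ the moduli space $M_e(a,Y_0)$ can be analyzed in explicit coordinates $f=[\phi_0,\ldots,\phi_n]$: its components are the explicitly indexed $M_e(a.,i.,j.)$ (recording how many incidence points lie on each stratum $H_{i_k}\cap H_{j_k}$), and one can \emph{construct by hand}, inside every such component, an unobstructed comb $C'\cup_x L$ with $C'\in M_{e-1}(a'.,i'.,j'.)$ and $L$ a $1$-secant line ($H^1(N^s_L(-x))=H^1(N^s_{C'}(-p))=0$). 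Any component $B$ of $M_e(a,Y)$ specializes onto a union of the $M_e(a.,i.,j.)$, hence by unobstructedness contains a comb $C''\cup L$ with $C''\in M_{e-1}(a-1,Y)$, and the inductive irreducibility of $M_{e-1}(a-1,Y)$ pins $B$ down uniquely. So the broken curves are exhibited by explicit construction in a degenerate situation and then lifted, rather than produced by bend-and-break in the general situation — which is precisely what circumvents the uncontrollable-breaking-type problem on which your proposal founders.
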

\begin{proof}
	The idea is to degenerate $Y$ to
	\[Y_0=(H_1\cup...\cup H_c)\cap(H_{c+1}\cup...\cup H_{c+d})\]
where $H_0,...,H_n$ form a basis for 
the hyperplanes in $\P^n$.
We will prove first that the assertions of the Theorem,
except for the irreducibility, which is false, 
hold for $Y_0$ in place of $Y$.\par

Write
\eqspl{subscheme}{\a=\sum\limits_p \a_p}
where $p\in C$ are distinct and
$\a_p$ is supported at $p$ and has length $a_p$ with
$\sum a_p=a$.
We begin by analyzing the case where $C$ is irreducible, i.e.
$C=\P^1$.  In that case $f$ amounts to an $(n+1)$ tuple of $e$-forms:
\[f=[\phi_0,...,\phi_n], \phi_i\in H^0(\O_C(e))\]
defined up to a constant factor and up to reparametrization.
Here $\phi_i=f^*(H_i)$. Because any component of $M_e(a, Y_0)$
has codimension at most $a$ in the space of all maps, while it is
$e+1>a$ conditions for any $\phi_i$ to vanish, so we may assume
all $\phi_i\neq 0$, i.e. $f(C)$ is not contained in any coordinate
hyperplane $H_i$. Also, the conditions on $f$ to appear below
will involve $\phi_i$ only for $i>0$, 
so the general $(n+1)$-tuple satisfying them
will have
no common zero, making the corresponding rational map a morphism.\par

To each $p$ appearing in \eqref{subscheme} we associate index-sets
\[I(p)\subset [1,c], J(p)\subset[c+1, c+d]\] 
with \[f(p)\in (\bigcup \limits_{i\in I(p)}H_i)\cap
(\bigcup\limits_{j\in J(p)}H_j).\]
Then
\eqspl{order}{a_p\leq\min(|I(p)|, |J(p)|).}
The condition \eqref{order} means that
\eqspl{}{
\sum\limits_{i\in I(p)}\ord_p(\phi_i)\geq a_p,
\sum\limits_{i\in J(p)}\ord_p(\phi_i)\geq a_p, 	
}
which amounts to $2a_p$ conditions on $f$: namely, 
if $L_p$ denotes a linear form with zero set $p$, that
\[L_p^{a_p}|\prod\limits_{i\in I(p)}\phi_i,
L_p^{a_p}|\prod\limits_{i\in J(p)}\phi_i,\] 
and for different points $p$
these conditions are linearly independent. In fact these conditions
define a union of linear spaces each of which is of the form
\[\{(\phi.):\ord_p(\phi_i)\geq b_{p,i}, \forall i\in I(p)\cup J(p)\},\]
where the $(b_{p, .})$ is  a sequence of nonnegative integers satisfying
\[\sum\limits_{i\in I(p)}b_{p,i}=a_p, \sum\limits_{i\in J(p)}b_{p,i}=a_p.\]
Thus it is $2a$ conditions to map a given subscheme $\a$ to $Y_0$
and $2a-r, r=|\supp(\a)|$ conditions on $f$
to map some unspecified  subscheme of type $(a_p)$
(i.e. isomorphic to $\a$ as above) to $Y_0$.
Since $a\geq r$ with equality iff $\a$ is reduced, it follows that 
$f(C)$ is transverse to $Y$. Also, an easy dimension count shows that
$f$ cannot have degree $>1$ to its image. 
Moreover, via multiplication by $\a$,
$\O_C(e-a)\to \O_C(e)$, the linear system corresponding to $f$ contains
$n+1$ unrestricted sections of $\O_C(e-a)$, which is very ample
if $a<e$. Finally if $c+d\leq n-3$ the system contains 4 or more
unresticted sections of $\O(e)$, namely $\phi_0, \phi_{c_d+1},...\phi_n$,
so again it is very ample.
Thus, we have shown
the Theorem
holds for the part of $M_e(a, Y_0)$ corresponding to irreducible curves.
\par
Next we analyze the case where $C$ has nodes. Having a node is 
already 1 condition on $f$ so it suffices to proves that
having a length-$a$ subscheme map into $Y_0$ is at least $a$
further conditions.
The map $f$ may be viewed as a projection of a rational normal
tree in $\P^e$, (connected) union of rational normal curves in 
their respective spans. 
The foregoing analysis 
goes through unchanged for points $p$ that are smooth on $C$, so suppose
$p$ is a node, with local branch coordinates $x, y$. The structure
of $\a_p$ is well understood (see \cite{hilb}) and in any case $\a_p$
contains a subscheme $\a'_p=Z(x^\alpha, y^\beta)$ with $\alpha, \beta>0$
and $\alpha+\beta\geq a_p$. Analyzing as above, it is at least
$2(\alpha+\beta-1)$ conditions on $f$ to map $\a'_p$, hence $\a_p$,
into $Y_0$, and this is $>\a_p$ unless $\alpha=\beta=1$. In the latter
case, if $\a_p=\a'_p$ then $\a_p=Z(x,y)$ has length 1 while the number
of conditions is 2. Finally, assume $\a'_p\neq\a_p$.
This means $\a_p$ is a tangent vector, i.e.
a length-2 locally principal subscheme of the form
 $\a_p=Z(x+ty), t\neq 0$. Note we may assume $I(p), J(p)$
are singletons, or else $f$ must map a node of $C$ to a proper
substratum of $Y_0$ (of dimension $n-3$ or less), which is at least 3
conditions. Then
the condition on $f$ to map $\a_p$ into $Y_0$ is first
 that it must map $p$,  a node of $C$,
to the top stratum of $Y_0$ (2 conditions),
and then the (2-dimensional) 
Zariski tangent space to $f(C)$ at $f(p)$ must be non-transverse to
$Y_0$, which is 3 conditions in total. This completes the proof of the 
Theorem, except for the irreducibility, for $Y_0$ and hence for $Y$.\par
Note that the components of $M_e(a, Y_0)$ are
of the form $M_e(a., i., j.)$
  where for each $k$,
$ 1\leq i_k\leq c<j_k\leq d$ and $\sum a_k=a$, and the general curve
in $M_e(a., i.,j.)$ has  $a_k$ points on 
the top stratum that is open dense in $H_{i_k}
\cap H_{j_k}$ for each $k$. Thus $M_e(a, Y_0)$ is highly reducible.
Anyhow for such a curve $C$, the normal bundle $N$ (strictly speaking, the
normal bundle to the map $f$) is a quotient of a sum of line bundles
of degree $e$ hence is itself a sum of line bundles of degree
$e$ or more. Consequently, thanks to the condition
$a\leq e$, the 'secant bundle' $N^s$, which parametrizes
deformations preserving the incidence to $Y$ (cf. \cite{filling}), is semipositive.\par
Note that each $M_e(a., i., j.)$ contains curves of the form
$C'\cup_xL$, where $C'$ is general in $M_{e-1}(a'., i'., j'.)$
where $(a'., i'., j'.)$ is obtained from $(a., i.,j.)$ by replacing
a single $a_k$ by $a_k-1$ (and omitting $(a_k, i_k, j_k)$ if $a_k=1$),
and $L$ is a line joining a general point 
$x=f(p)$ on $f(C)$ with a general point of 
$H_{i_k}\cap H_{j_k}$. Indeed such a curve is clearly unobstructed as secant
thanks to the fact that $N^s_L(-x)$ and $N^s_{C'}(-p)$ are 
both sums of line bundles of degree $-1$ or more, 
hence have vanishing $H^1$.\par
With that said, the irreducibility of $M_e(a, Y)$ follows easily
by induction on $e$, 
the case $e=1$ being trivial thanks to $Y$ itself being irreducible
(this is where we use $n\geq 3$): 
let $B$ be an irreducible
component of $M_e(a, Y), a\leq e$, and consider its limit $B_0$ in
$M_e(a, Y_0)$, which is a sum of components $M_e(a., i., j.)$. 
Because each of these contains an unobstructed curve of type
$C'_x\cup L$ as above, $B$ contains a similar curve of the form
$C"_x\cup L$ with $C"\in M_{e-1}(a-1, Y)$ and 
since the latter family may be assumed irreducible, $B$ is unique
so $M_e(a, Y)$ is irreducible.
\end{proof}
\begin{rem}
	The low-degree hypothesis on $Y$ does not seem necessary
	for irreducibility; 
	on the other hand absent some upper bound on $a$ like
	$a\leq e$, $M_e(a, Y)$ may contain components parametrizing
	curves having a component contained in $Y$ so
	irreducibility may fail. Another obvious question is as to the
	Kodaira dimension of $M_e(a, Y)$: probably maximal for large $e, a$
	but it's not clear. 
	\end{rem}

\vfill\eject
\bibliographystyle{amsplain}
\bibliography{../mybib}
\end{document}